\def \eps{\varepsilon}
\def \a{\alpha}
\def \s{\sigma}
\def \k{\xi}
\def \G{\Gamma}
\def \UU{\mathcal U}
\def \R{\mathbb R}
\def \N{\mathbb N}
\def \B{\mathcal B}
\def \P{\mathcal P}
\def \st{\; | \;}
\newcommand*{\LargerCdot}{\raisebox{-0.2ex}{\scalebox{1.4}{$\cdot$}}}
\newcommand{\closeunder}[2]{\underset{\raise\hspace{-0.8mm}box{1mm}{\ensuremath{#2}}}{#1}}
\newcommand*{\quot}[2]%
{\ensuremath{%
    #1/\!\raisebox{-.65ex}{\ensuremath{#2}}}}
\newtheorem{theo}{Theorem}[section]
\newtheorem{prop}[theo]{Proposition}
\newtheorem{lem}[theo]{Lemma}
\newtheorem{cor}[theo]{Corollary}
\newtheorem{df}[theo]{Definition}
\newtheorem{rmq}[theo]{Remark}
\medskip\textbf{Theorem\ifx&#1&\else\space#1\fi.}%
\medskip\textbf{Corollary\ifx&#1&\else\space#1\fi.}%
\title{ Fibred coarse embeddability of box spaces and proper isometric affine actions on $L^p$ spaces}
\author{S. Arnt}
\date{}
\begin{document}

\renewcommand{\proofname}{Proof}
\renewcommand\refname{References}
\renewcommand\contentsname{Table of contents}
\renewcommand\chaptername{Chapter}
\renewcommand{\abstractname}{Abstract}
\renewcommand{\thefootnote}{\*}

\maketitle

\begin{abstract}
We show the necessary part of the following theorem : a finitely generated, residually finite group has property $PL^p$ (i.e. it admits a proper isometric affine action on some $L^p$ space) if, and only if, one (or equivalently, all) of its box spaces admits a fibred coarse embedding into some $L^p$ space. We also prove that coarse embeddability of a box space of a group into a $L^p$ space implies property $PL^p$ for this group.
\end{abstract}

\footnotetext{The research of the author was supported by grant 20-149261 of Swiss SNF and the ANR project GAMME.}

\section{Introduction}
%

The notion of fibred coarse embeddings into Hilbert space, which generalizes the notion of coarse embeddings, has been introduced by Chen, Wang and Yu in \cite{cwy} to provide a tool for the study of the maximal Baum-Connes conjecture. They proved in this paper that any metric space with bounded geometry admitting a fibred coarse embedding into a Hilbert space satisfies the maximal coarse Baum-Connes conjecture. In \cite{cww}, Chen, Wang and Wang characterized the Haagerup property in terms of fibred coarse embedding into Hilbert space: in fact, they showed that a finitely generated, residually finite group has the Haagerup property if, and only if, one of its box space admits a fibred coarse embedding into a Hilbert space. The goal of this note is to extend this result to the class of $L^p$ spaces (for a fixed $p \geq 1$).

\begin{theo}\label{maintheo}
 Let $\G$ be a finitely generated, residually finite group, $(\G_i)_{i\in N^*}$ be a nested sequence of finite index normal subgroups of $\G$ with trivial intersection and $1 \leq p \leq \infty$. Then $\G$ has property $PL^p$ if, and only if, the box space $\Box_{\{\G_i\}}\G$ admits a fibred coarse embedding into a $L^p$ space.
\end{theo}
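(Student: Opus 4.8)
The plan is to establish the two implications separately: the forward one by descending a proper affine isometric action of $\G$ to a fibred structure over each finite quotient, and the backward one by averaging a (fibred) embedding over each quotient, where $\G$ acts by isometries, and then passing to an ultralimit.

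\emph{Necessity: $PL^p\Rightarrow$ fibred coarse embedding.} Assume $\G$ acts properly by affine isometries on $E=L^p(\Omega,\mu)$ and write the action as $g\mapsto\big(v\mapsto\pi(g)v+b(g)\big)$, with $\pi$ a linear isometric representation and $b$ a $1$-cocycle, normalised by $b(e)=0$; properness is $\|b(g)\|\to\infty$ as the word length $|g|\to\infty$. I would set $\rho_+(t)=\sup_{|g|\le t}\|b(g)\|$ (finite, balls being finite) and $\rho_-(t)=\inf_{|g|\ge t}\|b(g)\|$ (non-decreasing, $\to\infty$ by properness), so that $\rho_-(|g^{-1}h|)\le\|b(g^{-1}h)\|\le\rho_+(|g^{-1}h|)$. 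For each $i$ the covering $\G\to\G/\G_i$ carries the flat affine $E$-bundle associated with the restricted action $\a|_{\G_i}$: the fibre over $\bar g$ is the affine $L^p$-space $E_{\bar g}=\{\eta:g\G_i\to E\st\eta(\g h)=\a(\g)(\eta(h))\ \forall\g\in\G_i\}$, isometric to $E$ by evaluation at any lift, and the identity $b(\g h)=\a(\g)(b(h))$ shows $\bar g\mapsto b|_{g\G_i}$ is a section $s_i$. Since the $\G_i$ are nested with trivial intersection, the injectivity radii $R_i=\tfrac12\min\{|\g|:\g\in\G_i\setminus\{e\}\}$ tend to $\infty$, so for any $r$ and all but finitely many $i$ the quotient map restricts to an isometry $B_\G(g,5r)\xrightarrow{\ \sim\ }B(\bar g,5r)$ for every $g$. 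Over a ball $B(\bar g,r)$ I would trivialise $E_{\bar z}$ by evaluation at the unique lift of $\bar z$ in $B_\G(g,r)$; then $s_i$ reads $\bar z\mapsto b(z)$, the relevant distance is $\|b(z)-b(z')\|=\|b(z'^{-1}z)\|$ with $|z'^{-1}z|=d(\bar z,\bar z')$ (hence $\rho_\pm$-controlled), and for two such trivialisations over overlapping balls the transition map is the identity of $E$, in particular an affine isometry. Assembling over all $i$ gives the desired fibred coarse embedding of $\Box_{\{\G_i\}}\G$ into $E$; the finitely many components whose injectivity radius is small compared to the scale are incorporated by the usual coarse-disjoint-union argument, for which it is convenient (as in the Hilbert case) to have started from an action on an $\ell^p$-space of $\G$.

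\emph{Sufficiency: a coarse embedding of a box space $\Rightarrow PL^p$.} Conversely, given a coarse embedding $f$ of $\Box_{\{\G_i\}}\G$ into $L^p(X,\nu)$ with control $\rho_\pm$, normalised to vanish at the class of $e$ in each component, I would exploit that $\G$ acts by isometries on each $\G/\G_i$, hence linearly isometrically on $\mathcal H_i:=\ell^p\!\big(\G/\G_i,L^p(X)\big)\cong L^p(\G/\G_i\times X)$ by $\lambda_i(\ell)\xi=\big(\bar k\mapsto\xi(\bar k\bar\ell)\big)$. Writing $f_i$ for $f$ restricted to $\G/\G_i$ and $V_i(g):=|\G/\G_i|^{-1/p}\big(\bar k\mapsto f_i(\bar k\bar g)\big)\in\mathcal H_i$, one checks $\lambda_i(\ell)V_i(g)=V_i(\ell g)$, so $c_i(\ell):=V_i(\ell)-V_i(e)$ is a genuine $1$-cocycle for $\lambda_i$ with $\|c_i(\ell)\|^p=\tfrac1{|\G/\G_i|}\sum_{\bar k}\|f_i(\bar k)-f_i(\bar k\bar\ell)\|^p$, every summand being controlled by $\rho_\pm$ at the single distance $|\bar\ell|_{\G/\G_i}$, which equals $|\ell|$ once $i$ is large. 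Thus $\|c_i(\ell)\|\le\rho_+(|\ell|)$ and, eventually in $i$, $\|c_i(\ell)\|\ge\rho_-(|\ell|)$. Fixing a non-principal ultrafilter $\UU$ on $\N^*$ and passing to the ultraproduct $\mathcal H_\UU=(\prod_i\mathcal H_i)_\UU$ pointed at the sequence $(V_i(e))_i$ (whose norms may go to infinity), which is again an $L^p$-space, the $\lambda_i$ and $c_i$ converge to a linear isometric representation $\lambda_\UU$ of $\G$ and a cocycle $c_\UU$ with $\|c_\UU(\ell)\|\ge\rho_-(|\ell|)\to\infty$; hence $\ell\mapsto\big(v\mapsto\lambda_\UU(\ell)v+c_\UU(\ell)\big)$ is a proper affine isometric action of $\G$ on an $L^p$-space, i.e. $\G$ has property $PL^p$.

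\emph{The full equivalence and the main obstacle.} To obtain the stated \emph{if and only if} one still needs the weaker hypothesis of a mere fibred coarse embedding to suffice. Here I would pull the trivialisation around the class of $e$ in $\G/\G_i$ back to a ball $B_\G(e,r_i)$ of $\G$ with $r_i\to\infty$, observe that the affine-isometric transition maps organise on that ball into an \emph{approximate} isometric representation and an approximate cocycle of $\G$, and let these converge along $\UU$ to an honest representation and cocycle on the ultraproduct $L^p$-space, proper by the lower control. This last step is the hard part: turning the purely local data of a fibred coarse embedding---local trivialisations whose transitions are only required to be affine isometries---into a single globally defined cocycle of an honest isometric representation, with its norm trapped between $\rho_-$ and $\rho_+$ so that nothing blows up when one averages over a component. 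It is exactly here that the affine-isometry condition on transitions, the uniformity built into the definition, and the fact that ultraproducts of $L^p$-spaces are again $L^p$-spaces are all used, and it is this last point that lets the Chen--Wang--Wang argument run verbatim for every $1\le p\le\infty$. Independence of the chosen sequence $(\G_i)$ then follows because both directions only see property $PL^p$ of $\G$ itself.
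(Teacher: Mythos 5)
Your overall architecture matches the paper's: the necessity direction via the flat affine bundle associated to the action restricted to $\G_i$ (this is exactly the Chen--Wang--Wang construction that the paper simply cites as Proposition \ref{cwwprop} and Corollary \ref{cwwcor}), and the sufficiency direction via an $\ell^p$-average over each finite quotient followed by an ultraproduct, using Krivine's theorem that ultraproducts of $L^p$ spaces are again $L^p$ spaces. Your treatment of the case of a genuine coarse embedding is essentially the paper's Proposition \ref{coarseemb} and is complete. One slip in the necessity direction: the transition map between two overlapping trivializations is not the identity of $E$ but the affine isometry $\a(\g)$ for the deck transformation $\g\in\G_i$ relating the two chosen lifts; if it were literally the identity the bundle would be globally trivial and $\Box\G$ would genuinely coarsely embed, which fails e.g.\ for free groups along property $(\tau)$ sequences. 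Since the definition only asks for a constant affine isometry on each overlap, the argument survives this.

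The genuine gap is in the step you yourself flag as ``the hard part'': passing from a fibred coarse embedding to property $PL^p$, which is the paper's main technical result (Proposition \ref{fceplp}) and the actual content of the ``if'' direction. Pulling back only ``the trivialisation around the class of $e$'' cannot work: a single trivialization over one ball carries no group action whatsoever, so there is no ``approximate representation'' to extract from it. The paper's construction uses the whole family of trivializations $\{t_{C_z}\}_{z\in X_{n_r}}$ over all $r$-balls $C_z$ of $X_{n_r}$: it sets $c^z_r(x)=t_{C_z}(z)(s(z))-t_{C_z}(zx)(s(zx))$ for $x$ in the $r$-ball of $e$, defines the candidate cocycle as the normalized $\ell^p$-direct sum $\tilde b_r(x)=(\#X_{n_r})^{-1/p}\bigoplus_z c^z_r(x)$, and --- crucially --- defines an honest $r$-locally isometric \emph{representation} $\tilde\s_r(x)$ by permuting the $z$-coordinates and twisting by the linear parts $\rho_{C_zC_{zx}}$ of the transition isometries $t_{C_zC_{zx}}$ supplied by condition ii) of Definition \ref{dffce}. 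The $r$-local cocycle identity $\rho_{C_zC_{zy}}(c_r^{zy}(x))+c_r^z(y)=c_r^z(yx)$ is then verified using the compatibility $t_{C_zC_{zy}}\circ t_{C_{zy}}(zy)=t_{C_z}(zy)$. None of this appears in your proposal, and it is precisely what distinguishes the fibred case from the coarse-embedding case; the ultraproduct step that follows (Lemma \ref{rlocultraprod}) is the part you did describe correctly. As written, the stated equivalence is therefore not proved.
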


We also prove the following proposition which extends to $L^p$ spaces a result of Roe in the setting of Hilbert spaces (see \cite{roe}).   

\begin{prop}\label{coarseemb}
 Let $\G$ be a finitely generated, residually finite group, $(\G_i)_{i\in N^*}$ be a nested sequence of finite index normal subgroups of $\G$ with trivial intersection and $1 \leq p \leq \infty$. If the box space $\Box_{\{\G_i\}}\G$ admits a coarse embedding into a $L^p$ space, then $\G$ has property $PL^p$.
\end{prop}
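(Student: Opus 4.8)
The plan is to build a proper affine isometric action of $\G$ on an $L^p$ space directly from the coarse embedding, by combining a whole sequence of auxiliary actions coming from the finite quotients $Q_i := \G/\G_i$; write $\pi_i\colon \G\to Q_i$ for the quotient maps. Recall that in $\Box_{\{\G_i\}}\G$ the metric restricted to the component $Q_i$ is the word metric of $Q_i$ for the image generating set, so in particular $d_{Q_i}(x,xa)=|a|_{Q_i}$ for all $x,a\in Q_i$ by left invariance, and $|\pi_i(\g)|_{Q_i}\le|\g|_\G$ always. Let $f\colon\Box_{\{\G_i\}}\G\to L^p$ be a coarse embedding with control functions $\rho_-,\rho_+$, which we may take non-decreasing with $\rho_-(t)\to\infty$. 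The one place where the box-space hypothesis is genuinely used is the observation that the restrictions $f_i:=f|_{Q_i}\colon Q_i\to L^p$ are coarse embeddings sharing the \emph{same} pair $(\rho_-,\rho_+)$ for every $i$.

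Next I would attach to each index $i$ an affine isometric action of $\G$ on $E_i:=\ell^p(Q_i,L^p)$, the space of $L^p$-valued functions on $Q_i$ with the normalized counting measure, which is again an $L^p$ space. Let $\lambda_i$ be the isometric representation of $Q_i$ on $E_i$ given by $(\lambda_i(a)\eta)(x)=\eta(xa)$ and set $b_i(a)(x):=f_i(x)-f_i(xa)$; a one-line computation shows that $b_i$ is a $\lambda_i$-cocycle, so $\rho_i:=\lambda_i\circ\pi_i$ together with $c_i:=b_i\circ\pi_i$ defines an affine isometric action $\alpha_i$ of $\G$ on $E_i$, with $c_i(1)=0$. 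Using $d_{Q_i}(x,xa)=|a|_{Q_i}$ and the uniform control functions one gets, for every $\g\in\G$,
\[
\rho_-\!\bigl(|\pi_i(\g)|_{Q_i}\bigr)\ \le\ \|c_i(\g)\|_{E_i}\ \le\ \rho_+\!\bigl(|\pi_i(\g)|_{Q_i}\bigr)\ \le\ \rho_+\!\bigl(|\g|_\G\bigr).
\]
Since the $\G_i$ are nested with trivial intersection, there is a sequence $r_i\to\infty$ with $\G_i\cap B_\G(1,2r_i)=\{1\}$, and for such $r_i$ one checks that $|\pi_i(\g)|_{Q_i}=|\g|_\G$ whenever $|\g|_\G\le r_i$; hence $\|c_i(\g)\|_{E_i}\ge\rho_-(|\g|_\G)$ on the ball $B_\G(1,r_i)$.

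Finally I would glue these actions together. Pick weights $\eps_k:=2^{-k/p}$ (read $\eps_k:=1$ when $p=\infty$) and a rapidly increasing subsequence $(i_k)$, and let $\G$ act on $E:=\bigoplus^{(p)}_k E_{i_k}$ — still an $L^p$ space — via $\rho:=\bigoplus_k\rho_{i_k}$ with cocycle $c:=(\eps_k c_{i_k})_k$. The uniform upper bound gives $\|c(\g)\|_E^p\le\rho_+(|\g|_\G)^p\sum_k 2^{-k}<\infty$, so the action is well defined, while for $\g$ with $|\g|_\G$ large, taking $k_0$ least with $|\g|_\G\le r_{i_{k_0}}$ yields $\|c(\g)\|_E\ge\eps_{k_0}\,\rho_-(|\g|_\G)\ge\eps_{k_0}\,\rho_-(r_{i_{k_0-1}})$; choosing $(i_k)$ so that $\eps_{k+1}\,\rho_-(r_{i_k})\to\infty$ (possible since $r_i\to\infty$ and $\rho_-\to\infty$) forces $\|c(\g)\|_E\to\infty$ as $|\g|_\G\to\infty$, so the action is metrically proper and $\G$ has property $PL^p$. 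The step that needs the most care is this last gluing — the choice of subsequence and weights so that properness survives the averaging — together with the bookkeeping around injectivity radii and the verification of the cocycle identity; all of this is routine. The one conceptual point, and the reason a mere sequence of finite graphs would not suffice, is the uniformity of $(\rho_-,\rho_+)$ across the components. Note that, unlike Roe's original Hilbert-space argument, this construction never passes through negative-type kernels, which is precisely why it goes through unchanged for every $p\in[1,\infty]$.
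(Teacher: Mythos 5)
Your construction of the per-quotient affine actions is essentially the one in the paper: the regular representation of $Q_i=\G/\G_i$ on the normalized $\ell^p$-sum of copies of the target $L^p$ space indexed by $Q_i$, with cocycle built from the differences $f(zx)-f(z)$, pulled back to $\G$ along the quotient map; the uniform bounds $\rho_-(|\pi_i(\g)|)\leq\|c_i(\g)\|\leq\rho_+(|\g|)$ and the injectivity-radius observation that $|\pi_i(\g)|=|\g|$ on balls of radius $r_i\to\infty$ correspond exactly to the paper's choice of $r$-isometric quotients and its estimate $(*)$. Where you genuinely diverge is the limiting step. The paper feeds the sequence of global actions into Lemma \ref{rlocultraprod}: it forms the ultraproduct $B_{\UU}$ of the spaces $\bigoplus_{X_{n_r}}B$ with respect to a non-principal ultrafilter and invokes Krivine's theorem that the class of $L^p$ spaces is closed under ultraproducts. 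You instead take a weighted $\ell^p$-direct sum over a sparse subsequence, with weights $2^{-k/p}$ chosen so that the sum converges (thanks to the uniform upper bound $\rho_+(|\g|)$) while properness survives (by letting the injectivity radii grow fast enough relative to the weights). Both arguments are correct. Your route is more elementary and constructive: a countable $\ell^p$-sum of $L^p$ spaces is manifestly an $L^p$ space, so no ultrafilters and no Krivine are needed, and the argument works for any class of Banach spaces closed under such $\ell^p$-sums. The paper's ultraproduct machinery is set up because it is also what drives the harder fibred case (Proposition \ref{fceplp}), where the actions are only $r$-local and there is no single global action on each piece to sum; for the plain coarse-embedding statement your direct sum suffices.
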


 Theorem \ref{maintheo} and Proposition \ref{coarseemb} can be stated for other classes of Banach spaces instead of $L^p$ spaces. In fact, the proof of the necessary condition (see Proposition \ref{fceplp}) and the proof of Proposition \ref{coarseemb} only uses the fact that the class of $L^p$ spaces (for a fixed $1\leq p < \infty$) is a class $\B$ of Banach spaces satisfying the following properties: \smallskip
 \begin{enumerate}
  \item $\B$ is closed under taking some particular normed finite powers i.e.: \smallskip \\
  for every $n \in \N^*$ and every $B \in \B$, there exists a norm $N$ on $\R^n$ such that:
  \begin{itemize}
   \item there exists $c \geq 0$ such that, for all $K,K' \geq 0$ the $n$-cube $\{x \in \R^n \st K\leq x_i \leq K'\}$ is contained in the annulus $\{x\in \R^n \st cK \leq N(x) \leq cK'\}$ - or, in other words, for all $x \in \R^n$, if the components of $x$ are controlled below by $K$ and above by $K'$ then so does $\frac{1}{c}N(x)$; \smallskip
   \item the Banach space $B^n$ endowed with the norm $\|\LargerCdot\|=N(\|\pi_{1}(\LargerCdot)\|_{B},...,\|\pi_{n}(\LargerCdot)\|_{B})$ belongs to $\B$ (where $\pi_i$ is the canonical projection of $B^n$ on its $i$-th factor).
  \end{itemize}
In the $L^p$ case, for $n \in \N^*$, the norm of $\ell_p^n=\ell^p(\{1,...,n\})$ fits, and $c=n^{\frac{1}{p}}$.  \smallskip
  \item $\B$ is closed under ultraproducts (see Definition \ref{ultraproduct}). \\
  In the $L^p$ case, the stability by ultraproduct is a result due to Krivine (see \cite{kri} Theorem 1 and its application p.17).
 \end{enumerate}

For a class of Banach spaces $\B$, property $P\B$ is an analog of the Haagerup property viewed with the Gromov's definition of a-T-menability (definition in terms of isometric affine actions, see \cite{gro93} or \cite{checowjoljulval}) where the class of Hilbert spaces is replaced by the class $\B$. One of the motivation in the study of this property is given by a result of Kasparov and Yu in \cite{kasyu} which asserts that groups admitting coarse embeddings into uniformly convex Banach spaces satisfy the Novikov conjecture (in particular, groups having property $P\B$ where $\B$ is a subclass of uniformly convex Banach spaces admit such embeddings). \\


An \emph{isometric affine action} of a group $\G$ on a Banach space $B$ is a morphism $\a$ of $\G$ into the group $\text{Aff}(B)\cap \text{Isom}(B)$ of affine isometric transformations of $B$; such an action can be characterized by the following decomposition:
$$ \a(g)v = \pi(g)v+b(g), \text{ for all }g\in \G,\; v \in B, $$
where $\pi$ is an isometric representation of $\G$ on $B$ and $b$ is a 1-cocycle with respect to $\pi$ i.e., for all $g,h \in \G$, $b(gh)=\pi(g)b(h)+b(g)$. \\
The action $\a$ is said to be \emph{proper} if $\|b(g)\|_B \underset{g \rightarrow \infty}{\longrightarrow}+\infty$. 

\begin{df}\label{pb}
 Let $\B$ be a class of Banach spaces. A (discrete) group $\G$ is said to have \emph{property $P\B$} if there exists a proper isometric affine action of $\G$ on some Banach space $B \in \B$.
\end{df}

Many recent progress has been made in the study of isometric affine actions on Banach spaces, and more particularly in the case of $L^p$ spaces for a fixed $1\leq p \leq \infty$. Bader, Furman, Gelander, Monod studied the relationships between two different generalizations of Kazhdan's property $(T)$, namely property $FL^p$ and property $(T_{L^p})$ in \cite{badfurgelmon}. On the other hand, property $PL^p$, also referred as a-$FL^p$-menability by some authors, is a strong negation of property $FL^p$. Examples of $PL^p$ groups are given by \cite{yuhyp}, where Yu proved that, for a discrete hyperbolic group $\G$, there exists $2 \leq p_0 < \infty$ such that $\G$ has property $PL^p$ for all $p \geq p_0$; or by \cite{cortesval}, where Cornulier, Tessera, Valette showed that the hyperbolic simple Lie group Sp$(n,1)$ has property $PL^p$ for all $p > 4n+2$. We give here an overview of what is known about the links between property $PL^p$ and $PL^q$ for various values of $p$ and $q$:
\begin{center}
\begin{tabular}{clcl}
 (1)& Haagerup (=$PL^2$) &$\Rightarrow$& $PL^p$ for all $0<p\leq \infty$ \\
 (2)& $PL^p$ for some $0<p\leq 2$ &$\Leftrightarrow$& Haagerup \\
 (3)&$PL^p$ for some $p>2$ & $\nRightarrow$& Haagerup \smallskip \\
 (4)&$PL^p$ for some $p>2$ &$\underset{??}{\Rightarrow}$& $PL^q$ for all $q>p$ \\
\end{tabular}
\end{center}
Implication (1) was proved in \cite{chemarval} by Cherix, Martin and Valette for countable discrete groups, using the notion of spaces with measured walls. Equivalence (2) follows from results of Delorme-Guichardet (\cite{gui}, \cite{del}) and Akemann-Walter (\cite{akewal}). See \cite{chadruhag} Corollary 1.5 and Remark 1.6 for proofs and discussions about (1) and (2) in the setting of second countable, locally compact groups. \\
Assertion (3) follows from the fact that a discrete hyperbolic group with property $(T)$ fails the Haagerup property but has $PL^p$ for some $p >2$ by the result of Yu quoted before. We mention that assertion (4) is still an open question which appears in \cite{chadruhag}, Question 1.8. \smallskip \\
Concerning stability, property $PL^p$ (for a fixed $p >2$) is closed under taking closed subgroups, direct sums, amalgamated free products over finite subgroups (see \cite{pil} and \cite{arnt1} for proofs of this result with different approachs) but it is not stable by extension in general. However, using a construction of Cornulier, Stalder and Valette in \cite{wreath}, the author showed in \cite{arn} that property $PL^p$ is closed under wreath product by Haagerup groups. We would like to mention that Haagerup property is stable by amenable extensions, but for property $PL^p$ with $p>2$, it remains an open problem. 

\begin{rmq}
 Notice that unlike in the Hilbert spaces case, property $PL^p$ is no longer equivalent to property $HL^p$ i.e. the existence of a $C_0$ representation on some $L^p$ space which almost has invariant vectors. For instance, a discrete hyperbolic group with property (T) has property $PL^p$ for some $p>2$, but it also has property $(T_{L^p})$ for all $p\geq 1$ (see \cite{badfurgelmon}) which is a strong negation of property $HL^p$.
\end{rmq}

\begin{df}\label{boxspace}
 Let $\G$ be a finitely generated, residually finite group and let $\G_1\unrhd ... \unrhd \G_i \unrhd ...$ be a nested sequence of finite index normal subgroups of $\G$ such that $\bigcap_{i=1}^{\infty} \G_i =\{e\}$. The box space associated with the sequence $\{\G_i\}_{i \in \N^*}$, denoted by $\Box_{\{\G_i\}}\G$ or simply $\Box \G$, is the coarse disjoint union $\bigsqcup_{i=1}^{\infty}\G/\G_i$ of the finite quotient groups, i.e., the disjoint union where each quotient is endowed with the metric induced by the image of the generating set of $\G$, and the distances between the identity elements of two successive quotients are chosen to be greater than the maximum of their diameters.
\end{df}

There is a large spectrum of analytic properties of a group $\G$ which link to geometric properties of its box space $\Box \G$. As in \cite{cww}, we summarize here different correspondances:
\begin{center}
\begin{tabular}{lcl}
 $\G$ amenable &$\Leftrightarrow$& $\Box\G$ Property A \\
 $\G$ Property $(T)$ &$\Leftrightarrow$& $\Box\G$ geometric Property $(T)$ \\
 $\G$ Haagerup & $\Leftrightarrow$& $\Box\G$ fibred coarsely embeddable into Hilbert space \medskip \\
 $\G$ Property $PL^p$ &$\Leftrightarrow$& $\Box\G$ fibred coarsely embeddable into some $L^p $ \\
 $\G$ Property $PL^p$ &$\Leftarrow$& $\Box\G$ coarsely embeddable into some $L^p $ \\
\end{tabular}
\end{center}

The first equivalence was established by Roe in \cite{roe} where Property $A$ is a non-equivariant version of amenability defined by Yu (\cite{yua}) which guarantees coarse embeddability into Hilbert spaces. The second one is due to Willett and Yu in \cite{wilyu} where they introduced the notion of geometric property (T). For a coarse disjoint union of finite graphs, geometric property (T) implies the property of being
an expander. The third equivalence is the result of Chen, Wang and Wang (\cite{cww}) mentioned in the introduction. \\
The last two assertions are proved in the present note. In \cite{roe}, Roe established the last implication in the Hilbert case ($p=2$ case); and notice that the converse implication fails. In fact, on one hand, the free group on two generators has the Haagerup property, and on the other hand, it has property ($\tau$) with respect to some sequences of finite index normal subgroups (see \cite{lub}): hence, the associated box spaces are expanders, which implies that they are not coarsely embaddable into Hilbert space. \\

\emph{Acknowledgement.} The author wish to thank Ana Khukhro, Thibault Pillon and Alain Valette for their comments and helpful discussions during the
elaboration of this note. We also thank Gilles Lancien for raising a question that led to this paper.

\section{Fibred Coarse embeddings into Banach spaces}

We recall here the notion of coarse embedding and the notion of fibred coarse embedding introduced in \cite{cwy} where the notion of Banach spaces replaces the original Hilbert spaces model. \bigskip

\begin{df}\label{dfce}
 Let $(X,d)$ be a metric space and $B$ be a Banach space. A map $f:X\rightarrow B$ is said to be a \emph{coarse embedding} of $X$ into $B$ if there exist two non-decreasing functions $\rho_1$ and $\rho_2$ from $[0, +\infty)$ to $(-\infty,+\infty)$ with $\lim_{r \rightarrow +\infty}\rho_i(r)=+\infty$ for $i=1,2$, such that, for all $x,y \in X$:
 $$\rho_1(d(x,y))\leq \|f(x)-f(y)\|\leq \rho_2(d(x,y)). $$
\end{df}

\begin{rmq}\label{pinfty}
 Every metric space $(X,d)$ admits a coarse embedding into $\ell^{\infty}(X)$ via, for a fixed $x_0 \in X$, the map 
 $$ f:x \rightarrow \{y\mapsto d(x,y)-d(x_0,y)\}.$$
 In fact, $f$ is an isometric embedding. \\
 Moreover, for a finitely generated group $\G$ endowed with the word metric $d$ induced by a finite generating set, the same map $f: g \mapsto \{h\mapsto d(g,h)-d(e_{\G},h)\}$ is a proper cocycle with respect to the left regular representation on $\ell^{\infty}(\G)$. Hence, every finitely generated group has property $PL^{\infty}$.
\end{rmq}

\begin{df}\label{dffce}
 A metric space $(X,d)$ is said to admit a \emph{fibred coarse embedding into} a Banach space $B$, if there exist:
 \begin{enumerate}
  \item a field of Banach spaces $(B_x)_{x \in X}$ over $X$ such that each $B_x$ is affinely isometric to $B$; \\
  \item a section $s:X \rightarrow \bigsqcup_{x\in X} B_x$ (i.e. $s(x) \in B_x$); \\
  \item two non-decreasing functions $\rho_1$ and $\rho_2$ from $[0, +\infty)$ to $(-\infty,+\infty)$ with $\lim_{r \rightarrow +\infty}\rho_i(r)=+\infty$ for $i=1,2$ such that: \\
  for any $r >0$, there exists a bounded subset $K_r \subset X$ for which there exists a ``trivialization''  
  $$ t_C : (B_x)_{x \in C} \rightarrow C\times B $$
  for each subset $C \subset X\smallsetminus K_r$ of diameter less than $r$; that is, a map from $(B_x)_{x \in C}$ to the constant field $C \times B$ over $B$ such that the restriction to the fibre $B_x$ for $x \in C$ is an affine isometry $t_C:B_x \rightarrow B$, satisfying the following conditions:
  \begin{itemize}
   \item[i)] for any $x,y \in C$, $\rho_1(d(x,y))\leq \|t_C(x)(s(x))-t_C(y)(s(y))\|_B \leq \rho_2(d(x,y))$; \\
   \item[ii)] for any two subsets $C_1,C_2 \subset X\smallsetminus K_r$ of diameter less than $r$ with $C_1 \cap C_2 \neq \emptyset$, there exists an affine isometry $t_{C_1C_2}: B \rightarrow B$ such that $t_{C_1}(x)\circ t_{C_2}(x)^{-1}=t_{C_1C_2}$, for all $x \in C_1\cap C_2$.
  \end{itemize}

 \end{enumerate}

\end{df}

\begin{rmq}\label{ceimpfce}
 Let $(X,d)$ be a metric space and $B$ be a Banach space. If $X$ coarsely embeds into $B$ then $X$ fibred coarsely embeds into $B$. In fact, if $f:X \rightarrow B$ is a coarse embedding with control functions $\rho_1,\rho_2$ then a fibred coarse embedding of $X$ into $B$ is given by :
 \begin{enumerate}
  \item the field of Banach spaces $(B_x)_{x \in X}$ where $B_x:=B$ for all $x \in X$; \smallskip
  \item the section $s:x \mapsto f(x) \in B=B_x$; \smallskip
  \item the two control functions $\rho_1$ and $\rho_2$ and for each $r >0$, considering $K_r=\emptyset$, for all $C$ of diameter less than $r$, the ``trivial`` trivialisation given by, for $x \in X$, $t_C(x)=Id_B$ (which satisfies condition $i)$ and $ii)$ since $f$ is a coarse embedding).
  \end{enumerate}
\end{rmq}

The following proposition is proved by Chen, Wang and Wang in \cite{cww} (see Proposition 1.4) in the general setting of fibred coarse embeddings into metric spaces.

\begin{prop}\label{cwwprop}
 Let $\G$ be a finitely generated, residually finite group. If $\G$ acts properly isometrically on a metric space $Y$, then any box space $\Box\G$ admits a fibred coarse embedding into $Y$.
\end{prop}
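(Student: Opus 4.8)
The plan is to reduce the statement to two classical facts and then build the fibred coarse embedding ``by hand'' on small balls, where the box space looks like $\G$ itself. Fix a finite generating set $S$ of $\G$, write $|\cdot|$ for the associated word length, $d$ for the corresponding left-invariant metric on $\G$ and for the metrics it induces on the quotients $\G/\G_i$ and on $\Box\G$, and fix a base point $y_0\in Y$. The \textbf{first fact} is that the orbit map $\phi\colon\G\to Y$, $g\mapsto g\cdot y_0$, is a coarse embedding: with $L:=\max_{s\in S}d_Y(y_0,s\cdot y_0)$ the triangle inequality gives $d_Y(y_0,g\cdot y_0)\le L\,|g|$, while properness of the action forces $\{g\in\G:d_Y(y_0,g\cdot y_0)\le R\}$ to be finite for every $R$, so $\rho_1(t):=\inf\{d_Y(y_0,g\cdot y_0):|g|\ge t\}$ is non-decreasing and tends to $+\infty$; left-invariance then yields $\rho_1(d(g,h))\le d_Y(g\cdot y_0,h\cdot y_0)\le L\,d(g,h)$ for all $g,h$. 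The \textbf{second fact} is that, with $r_i:=\min\{|g|:g\in\G_i\setminus\{e\}\}$, the hypothesis $\bigcap_i\G_i=\{e\}$ together with finiteness of the balls of $\G$ gives $r_i\to+\infty$, and a short triangle-inequality argument shows that for every $g\in\G$ the quotient map $\pi_i$ restricts to a bijective isometry from $B_\G(g,r_i/2)$ onto $B_{\G/\G_i}(\pi_i(g),r_i/2)$; in particular every subset of $\G/\G_i$ of diameter $<r_i/2$ admits an isometric lift to $\G$, and any two such lifts have the same image under $\pi_i$.

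With this I construct the data of a fibred coarse embedding of $\Box\G=\bigsqcup_i\G/\G_i$ into $Y$ (in the metric-space form of Definition~\ref{dffce}: a field of metric spaces $(Y_x)$ each isometric to $Y$, with isometric trivialisations $Y_x\to Y$). Put $Y_x:=Y$ and $s(x):=y_0$ for all $x\in\Box\G$. Given $r>0$, the construction of the box-space metric lets me choose $N(r)$ so large that every subset of $\Box\G$ of diameter $<r$ missing $K_r:=\bigsqcup_{i\le N(r)}\G/\G_i$ lies in a single component $\G/\G_i$, and moreover $r_i>2r$ for all $i>N(r)$; note that $K_r$ is finite, hence bounded. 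For such a set $C\subseteq\G/\G_i$ I choose an isometric lift $\sigma_C\colon C\to\G$ (so $\pi_i\circ\sigma_C=\mathrm{id}_C$, $d(\sigma_C(x),\sigma_C(y))=d(x,y)$, and $\sigma_C(C)$ sits in a ball of radius $<r_i/2$), and define $t_C(x)\colon Y_x=Y\to Y$ to be the isometry $y\mapsto\sigma_C(x)\cdot y$.

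Condition (i) then follows immediately from the first fact: for $x,y\in C$,
\[ d_Y\big(t_C(x)(s(x)),\,t_C(y)(s(y))\big)=d_Y\big(\sigma_C(x)\cdot y_0,\,\sigma_C(y)\cdot y_0\big)=d_Y\big(y_0,\,g\cdot y_0\big),\qquad g:=\sigma_C(x)^{-1}\sigma_C(y), \]
and $|g|=d(\sigma_C(x),\sigma_C(y))=d(x,y)$, so $\rho_1(d(x,y))\le d_Y(y_0,g\cdot y_0)\le L\,d(x,y)$. For condition (ii), let $C_1,C_2$ (outside $K_r$, of diameter $<r$) meet; then $C_1,C_2$, hence $C_1\cap C_2$, lie in one component $\G/\G_i$ with $r_i>2r$. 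Fix $x_0\in C_1\cap C_2$; since $\pi_i(\sigma_{C_1}(x_0))=\pi_i(\sigma_{C_2}(x_0))$ and $\G_i$ is \emph{normal}, $\sigma_{C_2}(x_0)=\gamma\,\sigma_{C_1}(x_0)$ for some $\gamma\in\G_i$. Then $x\mapsto\gamma\,\sigma_{C_1}(x)$ and $\sigma_{C_2}$ are both isometric lifts of $C_1\cap C_2$ agreeing at $x_0$, with images inside $B_\G(\sigma_{C_2}(x_0),r)\subseteq B_\G(\sigma_{C_2}(x_0),r_i/2)$ on which $\pi_i$ is injective; hence $\sigma_{C_2}(x)=\gamma\,\sigma_{C_1}(x)$ for every $x\in C_1\cap C_2$, so $t_{C_1}(x)\circ t_{C_2}(x)^{-1}$ is left translation by $\sigma_{C_1}(x)\,\sigma_{C_2}(x)^{-1}=\gamma^{-1}$, independent of $x$. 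Setting $t_{C_1C_2}:=(y\mapsto\gamma^{-1}\cdot y)$ completes the verification.

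I expect the only delicate point to be condition (ii): this is precisely where normality of the $\G_i$ enters, guaranteeing that two local lifts of a small set differ by a single \emph{left} translation, so that the transition maps $t_{C_1C_2}$ are constant isometries of $Y$. Everything else is routine bookkeeping with the injectivity radii $r_i$, the box-space metric, and the standard fact that the orbit map of a proper isometric action of a finitely generated group is a coarse embedding. Finally, one should observe that the lifts $\sigma_C$ are in general not mutually compatible over a whole component $\G/\G_i$ — which is exactly why $\Box\G$ fibred coarsely embeds into $Y$ without necessarily coarsely embedding into it.
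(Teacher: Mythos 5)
The paper does not actually prove this proposition: it is quoted from Chen--Wang--Wang \cite{cww} (their Proposition 1.4), so there is no in-paper argument to compare yours against. Your construction is the standard one and is, in substance, correct: trivialise the (constant) field over each small set $C$ lying in a single quotient by choosing an isometric lift $\sigma_C$ of $C$ to $\G$ and acting on the base point, get condition (i) from the usual coarse-embedding estimates for the orbit map of a proper isometric action, and get condition (ii) by showing two lifts of $C_1\cap C_2$ differ by a single left translation by an element of $\G_i$ --- which is exactly where normality enters, as you correctly isolate. This mirrors (in the opposite direction) the $r$-localisation the paper does carry out in its proof of Proposition \ref{fceplp}, where $n_r$ is chosen so that $\pi_{n_r}$ is $r$-isometric.

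The one genuine slip is quantitative, in your ``second fact''. The quotient map $\pi_i$ is in general \emph{not} a bijective isometry from $B_\G(g,r_i/2)$ onto $B_{\G/\G_i}(\pi_i(g),r_i/2)$: it is injective there, but not isometric --- already for $\G=\Z$, $\G_i=n\Z$, two points near opposite ends of a radius-$n/2$ ball are at distance about $n$ in $\Z$ but at bounded distance in $\Z/n\Z$. The triangle-inequality argument you invoke gives isometric lifting only for subsets of diameter at most roughly $r_i/3$ (pick $g_0$ over $x_0\in C$ and $g_x$ with $d(g_0,g_x)=d(x_0,x)$; if some $\gamma\in\G_i$ realises $d(x,y)$ via $d(g_x,g_y\gamma)$, then $|\gamma|\leq d(g_y,g_0)+d(g_0,g_x)+d(g_x,g_y\gamma)<3\,\mathrm{diam}(C)$, which forces $\gamma=e$ only when $3\,\mathrm{diam}(C)\leq r_i$). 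So your requirement $r_i>2r$ for $i>N(r)$ is not enough for the argument as written; you should demand $r_i>3r$ (or $4r$, to be safe in the condition (ii) step as well). Since $r_i\to\infty$, this costs nothing and the rest of the proof goes through unchanged.
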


We can then the reformulate this statement in the context of property $P\B$:

\begin{cor}\label{cwwcor}
 Let $\G$ be a finitely generated, residually finite group and $\B$ a class of Banach spaces. If $\G$ has property $P\B$, then any box space $\Box\G$ admits a fibred coarse embedding into some Banach space $B \in \B$.
\end{cor}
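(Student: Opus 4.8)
The plan is to derive Corollary~\ref{cwwcor} from Proposition~\ref{cwwprop} by regarding an affine isometric action simply as an isometric action on the underlying metric space; the only subtlety is to check that a fibred coarse embedding into the \emph{metric space} $B$ is automatically a fibred coarse embedding into the \emph{Banach space} $B$ in the sense of Definition~\ref{dffce}.

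First I would unwind property $P\B$: it provides a proper isometric affine action $\a(g)v=\pi(g)v+b(g)$ of $\G$ on some $B\in\B$. Since $\|\a(g)v-\a(g)w\|_B=\|\pi(g)(v-w)\|_B=\|v-w\|_B$, each $\a(g)$ is a surjective isometry of the metric space $(B,\|\LargerCdot\|_B)$, so $\a$ is an isometric action of $\G$ on this metric space. Next I would check that this action is proper in the metric sense: a nonempty bounded subset $K\subseteq B$ lies in a ball of radius $R$ about $0$, and if $\a(g)v=w$ for some $v,w\in K$ then $\|b(g)\|_B=\|\a(g)0\|_B\leq\|\a(g)0-\a(g)v\|_B+\|w\|_B=\|v\|_B+\|w\|_B\leq 2R$; since $\|b(g)\|_B\to+\infty$ as $g\to\infty$, only finitely many $g$ can move $K$ so that it meets $K$. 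Applying Proposition~\ref{cwwprop} with $Y=B$ then yields, for any box space $\Box\G$, a fibred coarse embedding into the metric space $B$: a field $(B_x)_x$ of metric spaces each isometric to $B$, a section $s$, control functions $\rho_1,\rho_2$, and for every $r>0$ a bounded set $K_r$ with trivializations $t_C$ and transition maps $t_{C_1C_2}$ satisfying conditions i) and ii), where the fibres are identified with $B$ via surjective isometries and the $t_C(x)$ and $t_{C_1C_2}$ are surjective isometries.

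The last step is to upgrade these isometries to affine isometries, which is where the Mazur--Ulam theorem enters: every surjective isometry between real normed vector spaces is affine. Consequently, transporting the linear structure of $B$ to a fibre $B_x$ along a surjective isometry $B_x\to B$ produces a Banach space structure that is well defined up to an affine isometry, so each $B_x$ is affinely isometric to $B$; each restriction $t_C(x)\colon B_x\to B$ is then an affine isometry; and each $t_{C_1C_2}\colon B\to B$ is an affine isometry. Hence all the conditions of Definition~\ref{dffce} hold, and $\Box\G$ fibred coarsely embeds into $B\in\B$. I do not expect a genuine obstacle: the group-theoretic and coarse-geometric content is entirely absorbed by Proposition~\ref{cwwprop}, and the passage from metric isometries to affine isometries is the only point needing a word of justification.
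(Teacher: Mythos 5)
Your proof is correct and takes essentially the same route as the paper, which presents the corollary as an immediate reformulation of Proposition~\ref{cwwprop} applied to $Y=B$ and offers no further argument. The one detail you add --- invoking Mazur--Ulam to upgrade the surjective metric isometries supplied by Proposition~\ref{cwwprop} to the affine isometries demanded by Definition~\ref{dffce} --- is precisely the point the paper leaves implicit, and your justification of it is sound (in the real Banach space setting relevant here).
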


\section{Proof of the main results}

\begin{df}
 Let $\G$ be a finitely generated group and $r$ be a non-negative real. \\
 
 i) Let $X$ be a set. A map $\a: \G\times X \rightarrow X$ is said to be a \emph{$r$-locally action of $G$ on $X$} if:
 \begin{itemize}
  \item for all $g \in \G$ such that $d(e,g)<r$, $\a(g): X \rightarrow X$ is a bijection;
  \item for all $g,h\in \G$ such that $d(e,g)$, $d(e,h)$, $d(e,gh)$ are less than $r$, $$ \a(gh)=\a(g)\a(h).$$
 \end{itemize} \smallskip 

 ii) Let $B$ be a Banach space. A map $\pi: \G\times B \rightarrow B$ is said to be a \emph{$r$-locally isometric representation of $G$ on $B$} if $\pi$ is a $r$-locally action of $\G$ on $B$ and for all $g \in \G$ such that $d(e,g)<r$, $\pi(g): B\rightarrow B$ is a linear isometry.  \smallskip \\
 In this case, a map $b : \G \rightarrow B$ such that, for all $g,h \in \G$ such that $d(e,g)$, $d(e,h)$, $d(e,gh)$ are less than $r$, $\pi(g)b(h)+b(g)=b(gh)$, is called a \emph{r-locally cocycle with respect to $\pi$}. \medskip

 iii) Let $B$ be a Banach space. A map $\a : G\times B \rightarrow B$ is called a \emph{$r$-locally isometric affine action of $\G$ on $B$} if it can be written as $\a(g)\LargerCdot = \pi(g)\LargerCdot +b(g)$ where $\pi$ is a $r$-locally isometric representation and $b$ is a $r$-locally cocycle with respect to $\pi$.
\end{df} \smallskip 

Using the notion of ultrafilters and ultraproducts, one can build a global isometric affine action from a family of $r$-locally isometric affine actions with $r\rightarrow +\infty$. \smallskip \\
Let $\UU$ be a non-principal ultrafilter on $\N^*$ i.e. $\UU$ is a subset of $\P(\N^*)$ stable by intersection such that:
 \begin{itemize}
  \item the empty set $\emptyset$ does not belong to $\UU$,
  \item for all $A,B \in \P(X)$ such that $A \subset B$, $A \in \UU$ implies $B \in \UU$,
  \item for all $A \in \P(X)$, $A\in \UU$ or $X\setminus A \in \UU$.
  \item finite subsets of $\N^*$ do not belong to $\UU$.
 \end{itemize} \smallskip 
 The $\UU$-limit of bounded real valued sequence $(x_r)_{r\in \N^*}$ is the unique $x \in \R$ denoted by $\lim_{\UU}x_r$ such that for all $\eps >0$, the set $\{r \in \N^* \st |x_r -x| \leq \eps \}$ belongs to $\UU$.
 
 \begin{df}\label{ultraproduct}
  Let $(B_r)_{r\in \N^*}$ be a family of Banach spaces and consider the space $\ell^{\infty}(\N^*,(B_r)_{r\in \N^*})$ of sequences $(a_r)_{r \in \N^*}$ satisfying that there exists $K \geq 0$ such that for all $r \in \N^*$, $a_r \in B_r$ with $\|a_r\|_{B_i} \leq K$. \smallskip \\
The \emph{ultraproduct $B_{\UU}$ of the family $(B_r)_{r\in \N^*}$ with respect to a non-principal ultrafilter $\UU$} is the closure of the space $\ell^{\infty}(\N^*,(B_r))/\sim_{\UU}$ endowed with the norm $\|(a_r)\|_{B_{\UU}}:= \lim_{\UU}\|a_r\|_{B_r}$ where, for $(a_r), (b_r) \in \ell^{\infty}(\N^*,(B_r))$, \smallskip 
\begin{center}
 $(a_r) \sim_{\UU} (b_r)$ if, and only if, $\|(a_r)-(b_r)\|_{B_{\UU}}=0$.
\end{center}

 \end{df}

\begin{lem}\label{rlocultraprod}
 Let $\G$ be a finitely generated group, $(B_r)_{r\in \N^*}$ be a family of Banach spaces and $B_{\UU}$ be the ultraproduct of the family $(B_r)$ with respect to a non-principal ultrafilter $\UU$ on $\N^*$. For each $r \in \N^*$, assume that $\G$ admits a $r$-locally isometric affine action $\a_r$ on $B_r$ with $\a_r(g)\LargerCdot = \pi_r(g)\LargerCdot +b_r(g)$.\\
 If, for all $g \in \G$, $(b_r(g))_{r \in \N^*}$ belongs to $B_{\UU}$, then there exists an isometric affine action $\a$ of $G$ on $B_{\UU}$ of the family $(B_r)$ such that $\a(g)\LargerCdot =\pi(g)\LargerCdot +b(g)$ where $\pi$ is an isometric representation of $\G$ on $B_{\UU}$ and $b:G \rightarrow B_{\UU}$ is a cocycle with respect to $\pi$ satisfying, for $g \in \G$:
 $$ b(g)=(b_r(g))_{r \in \N^*}. $$
\end{lem}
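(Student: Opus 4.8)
The plan is to construct $\pi$ and $b$ termwise and to observe that, for each fixed element $g$ (resp.\ each pair $g,h$) of $\G$, every identity one needs involves only the \emph{cofinite} set of indices $r$ with $d(e,g) < r$ (resp.\ with $d(e,g),d(e,h),d(e,gh) < r$); such a set belongs to $\UU$ by non-principality, so the $\UU$-limit defining $\|\LargerCdot\|_{B_{\UU}}$ simply ignores the finitely many indices at which $\pi_r(g)$ is not even defined. Throughout I will use the elementary fact that if two bounded real sequences agree on a set of $\UU$, then they have the same $\UU$-limit.

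First I would fix $g \in \G$ and define a map on the dense subspace $\ell^{\infty}(\N^*,(B_r))/\sim_{\UU}$ of $B_{\UU}$ by
$$ \pi(g)\big[(a_r)_{r\in\N^*}\big] := \big[(\pi_r(g)a_r)_{r\in\N^*}\big], $$
where for the finitely many $r$ with $d(e,g) \geq r$ (for which $\pi_r(g)$ is not defined) the $r$-th term is set equal to $0$. Since $\{r \st d(e,g) < r\} \in \UU$ and $\pi_r(g)$ is a linear isometry of $B_r$ for such $r$, it follows at once that $(\pi_r(g)a_r)_r$ is bounded, that its $\sim_{\UU}$-class does not depend on the chosen representative of $[(a_r)]$ (two representatives differ by a $\sim_{\UU}$-null sequence, and $\pi_r(g)$ preserves norms on a set of $\UU$), and that $\|\pi(g)[(a_r)]\|_{B_{\UU}} = \lim_{\UU}\|\pi_r(g)a_r\|_{B_r} = \lim_{\UU}\|a_r\|_{B_r} = \|[(a_r)]\|_{B_{\UU}}$. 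Linearity being obvious, $\pi(g)$ is a linear isometry of the dense subspace, and I extend it by uniform continuity to a linear isometry of $B_{\UU}$, still denoted $\pi(g)$.

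Next I would check that $g \mapsto \pi(g)$ is an isometric representation. For $g,h \in \G$ the cofinite, hence $\UU$-large, set of $r$ with $d(e,g),d(e,h),d(e,gh) < r$ satisfies $\pi_r(gh) = \pi_r(g)\pi_r(h)$ by the $r$-locally action property; hence $\pi(gh)$ and $\pi(g)\pi(h)$ agree on every class, so on the dense subspace, so (by continuity) on all of $B_{\UU}$. Since $d(e,e)=0<r$ and $\pi_r(e)$ is a bijection satisfying $\pi_r(e)^2=\pi_r(e)$, one has $\pi_r(e)=\mathrm{Id}_{B_r}$, whence $\pi(e)=\mathrm{Id}_{B_{\UU}}$; taking $h=g^{-1}$ then gives $\pi(g^{-1})=\pi(g)^{-1}$, so each $\pi(g)$ is an invertible linear isometry. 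Finally I would set $b(g) := (b_r(g))_{r\in\N^*}$, which by hypothesis defines an element of $B_{\UU}$, still denoted $b(g)$. For $g,h \in \G$ and $r$ in the $\UU$-large set with $d(e,g),d(e,h),d(e,gh) < r$, the $r$-locally cocycle relation gives $b_r(gh) = \pi_r(g)b_r(h) + b_r(g)$; hence the sequences $(b_r(gh))_r$ and $(\pi_r(g)b_r(h) + b_r(g))_r$ coincide on a set of $\UU$ and so define the same element of $B_{\UU}$, which by the definition of $\pi(g)$ is exactly $\pi(g)b(h) + b(g)$. Thus $b$ is a $1$-cocycle for $\pi$, and $\a(g)\LargerCdot := \pi(g)\LargerCdot + b(g)$ is the asserted isometric affine action of $\G$ on $B_{\UU}$ with $b(g)=(b_r(g))_r$.

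I do not expect a genuine obstacle here: the only points demanding care are bookkeeping ones — that $\pi_r(g)$ is defined only for $r > d(e,g)$, which is harmless because $\UU$ is non-principal, and that $B_{\UU}$ is the \emph{closure} of $\ell^{\infty}(\N^*,(B_r))/\sim_{\UU}$, so that $\pi(g)$ must first be defined on the dense quotient and then extended, with the algebraic identities $\pi(gh)=\pi(g)\pi(h)$ and the cocycle relation transported to the closure by density and continuity. (In fact $\ell^{\infty}(\N^*,(B_r))/\sim_{\UU}$ is already complete, so this last extension is only a formality.)
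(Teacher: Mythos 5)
Your proposal is correct and follows essentially the same route as the paper's proof: define $\pi(g)$ and $b(g)$ termwise, and use the fact that for each $g,h$ the set of indices $r$ exceeding $\max(d(e,g),d(e,h),d(e,gh))$ is cofinite, hence in $\UU$, to transfer the representation, isometry, and cocycle identities to the ultraproduct. You are merely more explicit than the paper about the bookkeeping (well-definedness on $\sim_{\UU}$-classes, extension by density, invertibility of $\pi(g)$), all of which is sound.
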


\begin{proof}
For $g \in \G$, we define $\pi(g): B_{\UU} \rightarrow B_{\UU}$ by, for $a=(a_r)_{r \in \N^*} \in B_{\UU}$, 
$$ \pi(g)a=(\pi_r(g)a_r)_{r \in \N^*}; $$
and we set $b(g)=(b_r(g))_{r \in \N^*} \in B_{\UU}.$ \\
Let $g,h \in \G$. For all $r \in \N^*$ such that $r > \max(d(e,g), d(e,h), d(e,gh))$, we have, for all $(a_r) \in B_{\UU}$, $\pi_r(g)\pi_r(h)a_r=\pi_r(gh)a_r$ and then the set $\{ r \in \N^* \st \pi_r(g)\pi_r(h)a_r=\pi_r(gh)a_r \}$ belongs to $\UU$. Hence, for all $g,h \in \G$, $\pi(g)\pi(h)=\pi(gh)$.
Now, for $g \in \G$, since for all $r$ large enough, $\pi_r(g)$ is an isometric isomorphism of $B_r$, it follows, by a similar argument, that $\pi(g)$ is an isometric isomorphism of $\B_{\UU}$. \\
Thus, $\pi$ is an isometric representation of $\G$ on $B_{\UU}$. \\
Let $g,h \in \G$. For all $r \in \N^*$ such that $r > \max(d(e,g), d(e,h), d(e,gh))$, we have $b_r(gh)=\pi_r(g)b_r(h)+b_r(g)$. Hence, for all $g,h \in \G$, $b(gh)=\pi(g)b(h)+b(g)$ and then, $b$ is a cocycle with respect to $\pi$.
It follows that the map $\a$ such that $\a(g)\LargerCdot = \pi(g)\LargerCdot +b(g)$ is an isometric affine action of $\G$ on $B_{\UU}$.
\end{proof}

\begin{proof}[Proof of Proposition \ref{coarseemb}]
The case $p=\infty$ is trivial (see Remark \ref{pinfty}). \\
 Let $1\leq p < \infty$. Let $\{\G_n\}_{n \in \N^*}$ be a nested sequence of finite index normal subgroups of $\G$ with trivial intersection such that the associated box space $\Box \G$ admits a coarse embedding $f$ into a $L^p$ space denoted by $B$ with control functions $\rho_1,\rho_2$. \\
 Let $n \in \N^*$ and denote $X_n:=\G/\G_n$. Let us consider the Banach space $\bigoplus_{z\in X_{n}}B$ endowed with the following norm : for a vector $\k=\bigoplus_{z \in X_{n}}\k_z$,
 $$ \|\k\|_p=\left(\sum_{z \in X_{n}}\|\k_z\|_B^p\right)^{\frac{1}{p}}. $$
 For $x\in X_{n}$, we define the following vector of $\bigoplus_{z\in X_{n}}B$:
 $$ \tilde{b}_n(x):= \frac{1}{(\#X_{n})^{\frac{1}{p}}}\bigoplus_{z \in X_{n}}\left(f(zx)-f(z)\right); $$
 and let $\tilde{\s}_n$ be the isometric representation of $X_{n}$ on $\bigoplus_{X_{n}}B$ such that for $\k=\bigoplus_{z \in X_{n}}\k_z$,
 $$ \tilde{\s}_n(x)\k=\bigoplus_{z \in X_{n}}\k_{zx}. $$
 Then $\tilde{b}_n: X_{n} \rightarrow \bigoplus_{X_{n}}B$ is a cocycle with respect to $\tilde{\s}_n$. In fact, since for $x,y,z \in X_{n}$, we have $f(zxy)-f(z)=(f(zxy)-f(zx))+(f(zx)-f(z))$, it follows that: \smallskip
 \begin{center}
  \begin{tabular}{rl}
  $\tilde{b}_n(xy)$&$=\frac{1}{(\#X_{n})^{\frac{1}{p}}}\bigoplus_{z \in X_{n}}\left(f(zxy)-f(zx)\right)+\frac{1}{(\#X_{n})^{\frac{1}{p}}}\bigoplus_{z \in X_{n}}\left(f(zx)-f(z)\right),$ \medskip \\
  $\tilde{b}_n(xy)$&$=\tilde{\s}_n(x)\tilde{b}_n(y)+\tilde{b}_n(x)$.\\
 \end{tabular}
 \end{center}
 Moreover, since $f$ is a coarse embedding, we have, for all $x \in X_{n}$:
 $$ \rho_1(d_{X_n}(x,e))\leq \|\tilde{b}_n(x)\|_p\leq \rho_2(d_{X_n}(x,e)), $$
 where $e$ is the identity element of $X_n$. \medskip\\
 
 Now, for each $r \in \N^*$, choose $n_r$ such that the canonical quotient map $\pi_{n_r}:\G\twoheadrightarrow X_{n_r}$ is $r$-isometric and define $\s_r:=\tilde{\s}_{n_r} \circ \pi_{n_r}$ and $b_r:=\tilde{b}_{n_r} \circ \pi_{n_r}$. Thus, for every $r$, $b_{r}$ is a cocycle with respect to the isometric representation $\s_r$ of $\G$ on $\bigoplus_{X_{n_r}}B$ and we have, for $g \in \G$ such that $d_{\G}(g,e_{\G})<r$:
 $$ \rho_1(d_{\G}(g,e))\leq \|b_r(g)\|_p\leq \rho_2(d_{\G}(g,e_{\G})). \;\; (*)$$
 Let $\UU$ be a non-principal ultrafilter on $\N^*$ and $B_{\UU}$ be the ultraproduct of $\left(\bigoplus_{X_{n_r}}B\right)_{r \in \N^*}$. For each $r$, the map $\a_r$ defined by $\a_r(g)\LargerCdot:=\pi_r(g)\LargerCdot+b_r(g)$ for $g \in \G$, is an isometric affine action. By $(*)$, for all $g \in \G$, $(b_r(g))_{r \in \N^*}$ belongs to $B_{\UU}$. \\
 Hence, by Lemma \ref{rlocultraprod}, there exists an isometric affine action $\a$ of $\G$ on $B_{\UU}$ such that $b: g \mapsto (b_r(g))$ is a cocycle for this action. 
 Moreover, for $g \in \G$, since for all $r$ large enough, $ \rho_1(d_{\G}(g,e))\leq \|b_r(g)\|_p$, we have:
 $$ \rho_1(d_{\G}(g,e))\leq \|b(g)\|_{B_{\UU}}; $$
 hence $\a$ is proper. As the class of $L^p$ spaces is closed under $p$-normed powers and ultraproduct, it follows that $\G$ has property $PL^p$.
\end{proof}

For the next proposition, the steps of the proof are essentially the same as in the proof of Proposition \ref{coarseemb}. But, in this case, since for a given constant $r$, the trivialization of a fibred coarse embedding is defined on subsets of diameter less than $r$, we need to ''$r$-localize`` our construction of isometric affine actions of the quotient groups $\G/\G_{n_r}$.

\begin{prop}\label{fceplp}
 Let $1\leq p \leq \infty$ and let $\G$ be a finitely generated, residually finite group. If a box space $\Box \G$ of $\G$ admits a fibred coarse embedding into some $L^p$ space, then $\G$ has property $PL^p$.
\end{prop}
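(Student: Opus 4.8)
The plan is to follow the proof of Proposition~\ref{coarseemb}, but with the global coarse embedding replaced by the local data of a fibred coarse embedding; the cocycles built on the finite quotients $\G/\G_n$ will then only satisfy the cocycle relation on balls of bounded radius, i.e.\ they will be $r$-locally cocycles, and Lemma~\ref{rlocultraprod} will upgrade them, through an ultraproduct, to a genuine proper isometric affine action on an $L^p$ space. The case $p=\infty$ being Remark~\ref{pinfty}, fix $1\le p<\infty$ and a fibred coarse embedding of $\Box\G$ into an $L^p$ space $B$, with field $(B_x)_x$, section $s$, control functions $\rho_1,\rho_2$ (we may assume $\rho_1\ge 0$), and, for each $r_0>0$, a bounded set $K_{r_0}$ together with trivialisations $t_C$ and overlap maps $t_{C_1C_2}$ as in Definition~\ref{dffce}. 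Fix $r\in\N^*$. Since $K_{3r}$ is bounded it meets only finitely many of the pieces $\G/\G_n$, so, using $\bigcap_n\G_n=\{e\}$, I would choose $n_r$ so large that $X_{n_r}:=\G/\G_{n_r}$ is disjoint from $K_{3r}$ inside $\Box\G$ and the quotient homomorphism $\pi_{n_r}:\G\to X_{n_r}$ is $3r$-isometric. For $z\in X_{n_r}$, set $C_z:=B_{X_{n_r}}(z,r)$; this has diameter $<3r$ and lies outside $K_{3r}$, so the trivialisation $t_{C_z}$ is available.

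For $g\in\G$ with $d_{\G}(g,e)<r$, write $x:=\pi_{n_r}(g)$; by left-invariance of the quotient metric and $3r$-isometry, $d_{X_{n_r}}(z,zx)=d_{\G}(g,e)<r$ for every $z$, so $z,zx\in C_z$ and $z\in C_z\cap C_{zx}$. On the $L^p$ space $B_r:=\bigoplus_{z\in X_{n_r}}B$ with the norm $\|\xi\|_p=\big(\sum_z\|\xi_z\|_B^p\big)^{1/p}$, I would set
\[
b_r(g):=\frac{1}{(\#X_{n_r})^{1/p}}\bigoplus_{z\in X_{n_r}}\big(t_{C_z}(zx)(s(zx))-t_{C_z}(z)(s(z))\big),\qquad (\sigma_r(g)\xi)_z:=\vec{t}_{C_zC_{zx}}(\xi_{zx}),
\]
where $\vec{t}$ denotes the linear part of an affine isometry and $t_{C_zC_{zx}}$ is the overlap map from condition~ii) of Definition~\ref{dffce}; for $d_{\G}(g,e)\ge r$, I would set $b_r(g):=0$ and $\sigma_r(g):=\mathrm{Id}$. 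Applying condition~i) of Definition~\ref{dffce} inside $C_z$ to the pair $z,zx$ gives, exactly as in Proposition~\ref{coarseemb} (with $c=(\#X_{n_r})^{1/p}$), the estimate $\rho_1(d_{\G}(g,e))\le\|b_r(g)\|_p\le\rho_2(d_{\G}(g,e))$ whenever $d_{\G}(g,e)<r$; and $\sigma_r(g)$ is plainly a surjective linear isometry of $B_r$, since it composes the coordinate permutation $z\mapsto zx$ with a linear isometry of $B$ in each coordinate.

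The heart of the matter, and the step I expect to be the real obstacle, is checking that $\sigma_r$ is an $r$-locally isometric representation and $b_r$ an $r$-locally cocycle with respect to it. Let $g,h$ satisfy $d_{\G}(g,e),d_{\G}(h,e),d_{\G}(gh,e)<r$, and put $x=\pi_{n_r}(g)$, $y=\pi_{n_r}(h)$, so $\pi_{n_r}(gh)=xy$. One checks that the point $zxy$ lies in all three sets $C_z,C_{zx},C_{zxy}$ and that these pairwise intersect; condition~ii) of Definition~\ref{dffce} then supplies the identities $t_{C_zC_{zx}}\circ t_{C_{zx}C_{zxy}}=t_{C_zC_{zxy}}$ and $t_{C_z}(w)=t_{C_zC_{zx}}\circ t_{C_{zx}}(w)$ for $w\in C_z\cap C_{zx}$. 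Taking linear parts of the first gives $\sigma_r(g)\sigma_r(h)=\sigma_r(gh)$. For the second, with $\delta_C(a,b):=t_C(b)(s(b))-t_C(a)(s(a))$, one deduces $\delta_{C_z}(zx,zxy)=\vec{t}_{C_zC_{zx}}(\delta_{C_{zx}}(zx,zxy))$ and then, from the telescoping identity $\delta_{C_z}(z,zxy)=\delta_{C_z}(z,zx)+\delta_{C_z}(zx,zxy)$, obtains coordinate by coordinate $b_r(gh)=\sigma_r(g)b_r(h)+b_r(g)$. The genuinely delicate part is the bookkeeping: for a fixed $r$, tracking precisely which translates $C_z,C_{zx},C_{zxy}$ have diameter $<3r$, lie outside $K_{3r}$, and enjoy the nonempty intersections needed to invoke condition~ii), and correspondingly which $g,h$ are ``admissible''. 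This done, $\a_r:=\sigma_r(\LargerCdot)+b_r(\LargerCdot)$ is an $r$-locally isometric affine action of $\G$ on $B_r$.

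Finally, let $\UU$ be a non-principal ultrafilter on $\N^*$ and $B_\UU$ the ultraproduct of $(B_r)_r$. For each $g\in\G$ the sequence $(b_r(g))_r$ is bounded (by $\rho_2(d_{\G}(g,e))$, the finitely many null entries aside), hence lies in $B_\UU$, so Lemma~\ref{rlocultraprod} produces an isometric affine action $\a$ of $\G$ on $B_\UU$ with cocycle $b(g)=(b_r(g))_r$. Since $\|b_r(g)\|_p\ge\rho_1(d_{\G}(g,e))$ for all but finitely many $r$, we get $\|b(g)\|_{B_\UU}=\lim_\UU\|b_r(g)\|_p\ge\rho_1(d_{\G}(g,e))$, which tends to $+\infty$ as $g\to\infty$; hence $\a$ is proper. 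As the class of $L^p$ spaces is closed under $p$-normed powers (each $B_r$) and under ultraproducts ($B_\UU$), it follows that $\G$ has property $PL^p$.
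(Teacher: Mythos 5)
Your proposal is correct and follows essentially the same route as the paper: the same $\ell^p$-direct sum over the quotient with the $(\#X_{n_r})^{-1/p}$ normalisation, the same $r$-locally cocycle and $r$-locally representation built from the trivialisations $t_{C_z}$ and the linear parts of the overlap isometries $t_{C_1C_2}$, and the same passage to a global proper action via Lemma~\ref{rlocultraprod} and an ultraproduct. Your use of $K_{3r}$ and a $3r$-isometric quotient map is in fact slightly more careful than the paper's choice of $K_r$, since the $r$-balls $C_z$ have diameter up to $2r$.
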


\begin{proof}
 The case $p=\infty$ is trivial (see Remark \ref{pinfty}). \\
 Let $1\leq p < \infty$. Let $\{\G_n\}_{n \in \N^*}$ be a nested sequence of finite index normal subgroups of $\G$ with trivial intersection such that the associated box space $\Box \G$ admits a fibred coarse embedding into a $L^p$ space denoted by $B$. \\
 We set $X_n= \G/\G_n$ and $X=\bigsqcup_{n \in \N^*}X_n (=\Box \G)$. Let $r \in \N^*$. By Definition \ref{dffce}, there exist $K_r$ and a trivialization $t_C$ for each $C\subset X\smallsetminus K_r$ of diameter less than $r$ satisfying conditions $i)$ and $ii)$. \smallskip \\
 Now, choose $n_r$ large enough such that $X_{n_r} \subset X \smallsetminus K_r$ and the quotient map $\pi_{n_r}: \G \twoheadrightarrow X_{n_r}$ is $r$-isometric, i.e., for each subset $Y \subset \G$ of diameter less than $r$, $(\pi_{n_r})_{|_{Y}}$ is an isometry onto its image. \\
 For $z \in X_{n_r}$, we denote by $C_z:=\{x \in X_{n_r} \st d_{X_{n_r}}(z,x) < r\}$ the $r$-ball centered in $z$ of $X_{n_r}$ and we set, for $x \in X_{n_r}$, the following vector $c^z_r(x)$ of $B$:
 \begin{center}\begin{equation*}
    c^z_r(x):=\begin{cases} t_{C_z}(z)(s(z))-t_{C_z}(zx)(s(zx))&\text{ if } d_{X_{n_r}}(e,x) < r \text{ (i.e }x \in C_e\text{)}; \\
                                0&\text{ otherwise,}
                            \end{cases}
    \end{equation*}
   \end{center}
 where $e$ is the identity element of $X_{n_r}$.
 Notice that, by Definition \ref{dffce} 3. i) for any $z \in X_{n_r}$ and any $x \in C_e$, $\rho_1(d_{X_{n_r}}(e,x))\leq \|c^z_r(x)\|_B \leq \rho_2(d_{X_{n_r}}(e,x))$. \smallskip \\
 Let us consider the map $\tilde{b}_r : X_{n_r} \rightarrow \bigoplus_{z \in X_{n_r}}B$, defined by, for $x \in X_{n_r}$:
 $$ \tilde{b}_r(x)=\frac{1}{(\# X_{n_r})^{\frac{1}{p}}}\bigoplus_{z \in X_{n_r}}c^z_r(x). $$
 We endow $\bigoplus_{z \in X_{n_r}}B$ with the norm induced by the norm of $\ell_p^n$ i.e. for $\k=\bigoplus_{z \in X_{n_r}} \k_z$, $$ \|\k\|_p=\left(\sum_{z \in X_{n_r}}\|\k_z\|_B^p\right)^{\frac{1}{p}}.$$ \smallskip
 Hence, for $x \notin C_e$, $\tilde{b}_r(x)$ vanishes, and for $x \in C_e$, $\rho_1(d_{X_{n_r}}(e,x))\leq \|\tilde{b}_r(x))\|_p \leq \rho_2(d_{X_{n_r}}(e,x))$. \smallskip \\
 We claim that $\tilde{b}_r(x)$ is a $r$-locally cocycle for a $r$-locally isometric representation $\tilde{\s}_r$ that we define as follows: \\
 For $x \in C_e$ and $z \in X_{n_r}$, let $\rho_{C_zC_{zx}}$ be the linear part of the affine isometry $t_{C_zC_{zx}}:B \rightarrow B$. We define $\tilde{\s}_r(x): \bigoplus_{z \in X_{n_r}}B \rightarrow \bigoplus_{z \in X_{n_r}}B$ by, for $\k=\bigoplus_{z \in X_{n_r}}\k_z$:
 \begin{center}\begin{equation*}
    \tilde{\s}_r(x)(\k):=\begin{cases} \bigoplus_{z \in X_{n_r}}\rho_{C_zC_{zx}}(\k_{zx})&\text{ if } x \in C_e; \\
                                \k&\text{ otherwise.}
                            \end{cases}
    \end{equation*}
   \end{center}
 The map $\tilde{\s}_r$ is indeed a $r$-locally isometric representation: it is clear that $\tilde{\s}_r(x)$ is a isometric isomorphism for all $x \in X_{n_r}$; moreover it follows from Definition \ref{dffce} 3. ii) that $t_{C_zC_{zy}}\circ t_{C_{zy}C_{zyx}}=t_{C_zC_{zyx}}$ for all $x,y \in C_e$ with $d_{X_{n_r}}(e,yx)<r$, and then, $\rho_{C_zC_{zy}}\circ \rho_{C_{zy}C_{zyx}}=\rho_{C_zC_{zyx}}$. Hence, $\tilde{\s}_r(yx)=\tilde{\s}_r(y)\tilde{\s}_r(x)$. \smallskip \\
 Now, we have, for $x,y \in C_e$ with $d_{X_{n_r}}(e,yx)<r$, $ \tilde{\s}_r(y)(\tilde{b}_r(x))+\tilde{b}_r(y)=\tilde{b}_r(yx)$. In fact, by noticing that for an affine isometry $T$ with linear part $\rho$, $\rho(x-y)=Tx-Ty$, we have:
 \begin{center}
  \begin{tabular}{rl}
   $\rho_{C_zC_{zy}}(c_r^{zy}(x)) $&$=\rho_{C_zC_{zy}}\left(t_{C_{zy}}(zy)(s(zy))-t_{C_{zy}}(zyx)(s(zyx))\right),$ \smallskip \\
   &$=t_{C_zC_{zy}}\circ t_{C_{zy}}(zy)(s(zy))-t_{C_zC_{zy}}\circ t_{C_{zy}}(zyx)(s(zyx))+t_{C_z}(z)(s(z))-t_{C_z}(zy)(s(zy)),$ \smallskip\\
   $\rho_{C_zC_{zy}}(c_r^{zy}(x))$&$=t_{C_{z}}(zy)(s(zy))-t_{C_{z}}(zyx)(s(zyx))$ \smallskip\\
  \end{tabular}
 \end{center}
 since $t_{C_zC_{zy}}\circ t_{C_{zy}}(zy)=t_{C_z}(zy)$ (by Definition \ref{dffce} 3. ii)) . \smallskip \\
 Thus, $$\rho_{C_zC_{zy}}(c_r^{zy}(x))+c_r^{z}(y)=t_{C_{z}}(z)(s(z))-t_{C_{z}}(zyx)(s(zyx))=c_r^{z}(yx).$$ 
 It follow that: 
 $$\tilde{\s}_r(y)(\tilde{b}_r(x))+\tilde{b}_r(y)=\frac{1}{(\# X_{n_r})^{\frac{1}{p}}}\bigoplus_{z \in X_{n_r}}\left(\rho_{C_zC_{zy}}(c_r^{zy}(x))+c_r^z(y)\right)=\tilde{b}_r(yx)$$
 which proves our claim.
 \medskip  \\
 Now, let $\s_r:=\tilde{\s}_r \circ \pi_{n_r}$ and $b_r=\tilde{b}_r \circ \pi_{n_r}$ be the lifts of $\tilde{\s}_r$ and $\tilde{b}_r$ to the $r$-ball $\{g \in \G \st d_{\G}(e_{\G},g)<r\}$ of $\G$ and define $\s_r =Id$, $b_r=0$ outside the $r$-ball of $\G$. Then $\s_r$ is a $r$-locally isometric representation action of $\G$ on $\bigoplus_{X_{n_r}}B$, $b_r$ is a $r$-locally cocycle with respect to $\s_r$. Then the map $\a_r$ such that $\a_r(g)\LargerCdot := \s_r(g)\LargerCdot +b_r(g)$ is a $r$-locally isometric affine action of $\G$ on $\bigoplus_{X_{n_r}}B$ and we have, for $g \in \G$ with $d_{\G}(e_{\G},g)<r$:
 $$ \rho_1(d_{\G}(e_{\G},g))\leq \|b_r(g)\|_p \leq \rho_2(d_{\G}(e_{\G},g)). $$ \medskip
 
 From these local isometric affine actions, we build a global isometric affine action of $\G$ thanks to Lemma \ref{rlocultraprod}. \\ 
 Let $\UU$ be a non principal ultrafilter on $\N^*$, and let $B_{\UU}$ be the ultraproduct of the family $\left(\bigoplus_{X_{n_r}}B\right)_{r \in \N^*}$ with respect to $\UU$. For each $r \in \N^*$, $\a_r$ is a $r$-locally isometric affine action of $\G$ on $\bigoplus_{X_{n_r}}B$ and since, for any $g \in \G$, $\|b_r(g)\|_p \leq \rho_2(d_{\G}(e_{\G},g))$ for all $r \in \N^*$, $(b_r(g))_{r \in \N^*}$ belongs to $B_{\UU}$. Hence, by Lemma \ref{rlocultraprod}, there exists an isometric affine action $\a$ of $\G$ on $B_{\UU}$ such that $b: g \mapsto (b_r(g))$ is a cocycle with respect to the linear part of this action. Moreover, since for any $g \in \G$, $ \rho_1(d_{\G}(e_{\G},g))\leq \|b_r(g)\|_p$ for all $r$ large enough, we have, for all $g \in \G$:
 $\rho_1(d_{\G}(e_{\G},g))\leq\|b(g)\|_{B_{\UU}},$
 and thus, $\a$ is proper. \smallskip \\
 As the class of $L^p$ spaces is closed under $p$-normed powers and ultraproduct, it follows that $\G$ has property $PL^p$.
 \end{proof}

 \begin{proof}[Proof of Theorem \ref{maintheo}]
  It follows from Corollary \ref{cwwcor} and Proposition \ref{fceplp}.
 \end{proof}

 \bibliographystyle{alpha} 
 \bibliography{biblio/biblio}

\end{document}